\newtheorem{fact}{Fact}[section]
\newtheorem{lemma}[fact]{Lemma}
\newtheorem{theorem}[fact]{Theorem}
\newtheorem{definition}[fact]{Definition}
\newtheorem{corollary}[fact]{Corollary}
\newenvironment{proof}{{\bf Proof:  }}{\hfill\rule{2mm}{2mm}}
 \newcommand{\R}{\mathbb{R}}
\newcommand{\cA}{\mathcal{A}} \newcommand{\cB}{\mathcal{B}}
\DeclareMathOperator{\convOp}{conv}
\newcommand{\conv}{\convOp}
\begin{document}
\begin{frontmatter}
%% Title, authors and addresses

%% use the tnoteref command within \title for footnotes;
%% use the tnotetext command for the associated footnote;
%% use the fnref command within \author or \address for footnotes;
%% use the fntext command for the associated footnote;
%% use the corref command within \author for corresponding author footnotes;
%% use the cortext command for the associated footnote;
%% use the ead command for the email address,
%% and the form \ead[url] for the home page:
%%
%% \title{Title\tnoteref{label1}}
%% \tnotetext[label1]{}
%% \author{Name\corref{cor1}\fnref{label2}}
%% \ead{email address}
%% \ead[url]{home page}
%% \fntext[label2]{}
%% \cortext[cor1]{}
%% \address{Address\fnref{label3}}
%% \fntext[label3]{}

%% use optional labels to link authors explicitly to addresses:
%% \author[label1,label2]{<author name>}
%% \address[label1]{<address>}
%% \address[label2]{<address>}

\title{An Elementary Integrality
  Proof of Rothblum's Stable Matching Formulation}
\author[co]{Jochen K\"{o}nemann}
\author[co]{Kanstantsin Pashkovich}
\author[co]{Justin Toth\corref{cor1}}
\ead{wjtoth@uwaterloo.ca}
\address[co]{Department of Combinatorics and Optimization, University of Waterloo, Canada}

\cortext[cor1]{Corresponding author}

\begin{abstract}
  In this paper we provide a short new proof for the integrality of
  Rothblum's linear description of the convex hull of incidence
  vectors of stable matchings in bipartite graphs. The key feature of our proof is to show
  that extreme points of the formulation must have a $0,1$-component. 
\end{abstract}
\begin{keyword}
Stable Matching\sep Polytope\sep Extreme Points
\end{keyword}
\end{frontmatter}

\section{Introduction}

In an instance of the {\em stable marriage} problem, we are given a
bipartite graph $G=(\cA \cup \cB, E)$ where $\cA$ and $\cB$
traditionally represent sets of women and men, respectively. An edge
$ab \in E$ corresponds to an {\em acceptable} pair $a$ and $b$ of man
and woman. In the following, we let $N(u): = \{ v \,:\, uv \in E \}$ be
the set of neighbours of $u$ in $G$. Each node
$u \in V:=\cA \cup \cB$ specifies a complete {\em preference order}
$>_u$ over its neighbours where node $u$ prefers neighbour $v_1$
over $v_2$ iff $v_1 >_u v_2$. For ease of notation, we will think of $>_u$ as a total
ordering on $N(u) \cup \{\varnothing\}$ where 
$\varnothing$ is the least preferred element of each node $u \in V$. In their seminal paper
\cite{gale1962college}, Gale and Shapley introduced the above problem,
and provided a constructive proof of existence of so called {\em
  stable} matchings. A matching is a collection $M$ of edges in $E$
such that each node is incident to at most one edge in $M$. $M$ is
stable if, for every edge $uv \not\in M$, $M(u) >_u v$ or $M(v) >_v u$ where
$M(u)$ is the node matched to $u$ in $M$ if that exists, and
$M(u):=\varnothing$, otherwise. 

In this paper, we focus on polyhedral characterizations of the set of
incidence vectors of stable matchings.  Vande Vate first provided such
a description in \cite{vate1989linear} for the special case where $G$
is a complete bipartite graph.  Rothblum
\cite{rothblum1992characterization} later generalized Vande Vate's
result to incomplete preference lists and simplified the proof of
integrality.

We provide an even simpler, more compact argument for the integrality
of Rothblum's formulation. Our arguments are elementary and
rely solely on some well-known results on the symmetric difference
of stable matchings as well as some knowledge of the local structure
of extreme points in our formulation to achieve the desired result.

Necessary background from the literature will be covered in the section to follow. The main result, a one-page proof, will be given in section \ref{section:theorem}.

\section{Stable Matchings Preliminaries}

We briefly review a couple of well-known facts on stable matchings. 
For each edge $uv$ in $E$, we let
$\delta^{>u}(v):=\{ \{v,w\}\in E: w >_v u \}$ to be the set of edges
incident to $v$ and those of its neighbours that are preferred to $u$. 
For $v \in V$ let $N_{\max}(v)$ denote its most preferred neighbour.  
A matching $M$ in $G$ is now easily seen to be stable if 
\begin{equation}\label{eq:stability_def}
		M \, \cap \, \big(\delta^{>u}(v) \cup \delta^{>v}(u) \cup  \{e\} \big)\neq\varnothing\,,
\end{equation}
for all $e=uv \in E$. The following lemmas study the connected components of
the {\em symmetric difference} $M_1\triangle M_2:=(M_1\setminus M_2)
\cup (M_2 \setminus M_1)$ of 
stable matchings $M_1$ and $M_2$. Note that $M_1\triangle M_2$ is an edge set, however in this paper we refer to nontrivial connected components of the graph $(V, M_1\triangle M_2)$ as connected components of $M_1\triangle M_2$. Here, $V(C)$ and $E(C)$ denote
the set of nodes and edges, respectively, of a graph $C$.

\begin{lemma}\label{lemma:pref}
Let $M_1$ and $M_2$ be two stable matchings in $G$ and let $J$ be a
connected component in $M_1 \triangle M_2$. Then for some
$\{i,j\}=\{1,2\}$, we have
\begin{equation}\label{eq:pref}
	M_i(a)>_a M_j(a) ~~\text{and} ~~  M_j(b)>_b M_i(b)\,,\,
\end{equation}
for all $a \in V(J) \cap \cA$ and $b \in V(J) \cap \cB$.
\end{lemma}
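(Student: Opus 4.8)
The plan is to analyze the structure of a connected component $J$ in the symmetric difference of two stable matchings. First I would observe that since $M_1$ and $M_2$ are matchings, each node is incident to at most one edge of each $M_i$, so in $M_1 \triangle M_2$ every node has degree at most two. Hence each nontrivial connected component $J$ is either a simple path or an even cycle whose edges alternate between $M_1 \setminus M_2$ and $M_2 \setminus M_1$. This alternating structure is the backbone of the argument: along $J$, consecutive edges belong to different matchings.

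Next I would set up the comparison of preferences. For a node $a \in V(J) \cap \cA$ that is incident to edges of both matchings, the two neighbours $M_1(a)$ and $M_2(a)$ are distinct, so exactly one of $M_1(a) >_a M_2(a)$ or $M_2(a) >_a M_1(a)$ holds; define $i$ at this node to be the index giving the strictly preferred partner. (If $a$ is an endpoint of a path it may be matched in only one $M_i$, in which case its other partner is effectively $\varnothing$ and the preference is again strict.) The heart of the proof is to show this choice of index is \emph{consistent} along the whole component and \emph{opposite} on the two sides $\cA$ and $\cB$. The key local step uses the stability condition~\eqref{eq:stability_def}: consider an edge $e=ab \in J$, say $e \in M_1 \setminus M_2$. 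Since $e \notin M_2$ and $M_2$ is stable, $M_2$ must meet $\delta^{>a}(b) \cup \delta^{>b}(a) \cup \{e\}$; as $e \notin M_2$, either $M_2(a) >_a b = M_1(a)$ or $M_2(b) >_b a = M_1(b)$. Thus for each edge of $J$, at least one of its two endpoints strictly prefers its $M_2$-partner over its $M_1$-partner.

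I would then argue that in fact the preferences must align cleanly by sex. The goal is to establish that one matching is uniformly preferred by all $\cA$-nodes of $J$ while the other is uniformly preferred by all $\cB$-nodes. To do this I would propagate the local preference information along the alternating path/cycle: walking from an $\cA$-node to the adjacent $\cB$-node across an $M_1$-edge and then across an $M_2$-edge, the stability inequalities at successive edges force the strict preferences to propagate with a fixed orientation, so that if some $a_0 \in V(J)\cap\cA$ satisfies $M_1(a_0) >_{a_0} M_2(a_0)$ then every $a \in V(J)\cap\cA$ satisfies $M_1(a) >_a M_2(a)$ and every $b \in V(J)\cap\cB$ satisfies $M_2(b) >_b M_1(b)$, which is exactly~\eqref{eq:pref} with $i=1, j=2$. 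Concretely, along each edge the ``at least one endpoint prefers the other matching'' fact from stability, combined with the fact that a shared node cannot simultaneously prefer both its partners, pins down the orientation at both endpoints.

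I expect the main obstacle to be making the propagation rigorous rather than hand-wavy: one must rule out the possibility that the preferred index ``flips'' as one traverses $J$. The clean way to do this is to apply the stability condition~\eqref{eq:stability_def} to \emph{both} matchings at the non-matching edges of $J$ (every edge of $J$ is non-matching for exactly one $M_i$), obtaining two complementary strict-preference statements; combining them edge-by-edge along the component and using that each internal node meets one edge of each matching should force global consistency. Handling path endpoints, where a node may be unmatched in one matching (partner $\varnothing$), is a minor case that the total-order convention on $N(u)\cup\{\varnothing\}$ resolves, since $\varnothing$ is least preferred and the strict inequalities still hold.
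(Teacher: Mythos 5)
Your proposal is correct and follows essentially the same route as the paper's proof: decompose $J$ into an alternating path or cycle, apply the stability condition~\eqref{eq:stability_def} of the matching \emph{not} containing each edge of $J$ to conclude that at least one endpoint strictly prefers the other matching, and propagate this forcing along the component using the fact that each node strictly prefers exactly one of its two (distinct, possibly $\varnothing$) partners. The only point to tighten is that the forcing is one-directional---a node's preference pins down only the neighbour across the edge of the matching it prefers---so for a path component the propagation must be anchored at an endpoint, whose preference is forced by the $\varnothing$ convention, which is precisely what the paper does.
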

\begin{proof}
  Since $M_1$ and $M_2$ are matchings, $J$ is a path or a cycle with edges alternating between $M_1$ and $M_2$. Let
  $v\in V$ be an end node of $J$ if $J$ is a path, otherwise let $v$
  be an arbitrary node of $J$. For visualization of the proof see Figure~\ref{fig:lemma_pref}.

  W.l.o.g. $a:=v,\ a\in \mathcal{A}$ and $b:=M_1(a) >_a M_2(a)$. If
  $a=M_1(b) >_b M_2(b)$, the matching $M_2$
  violates~\eqref{eq:stability_def} for the edge $ab\in E$. Thus,
  $M_2(b) >_b M_1(b)=a$. Thus, $M_2(b)\neq \varnothing$ and the
  matching $M_1$ satisfies~\eqref{eq:stability_def} for the edge
  $e:=bM_2(b)\in E$ only if $M_1(M_2(b))>_{M_2(b)} b$. Continuing
  in this way, we obtain statement~\eqref{eq:pref}.
\end{proof}

\begin{figure}[H]
\centering
\begin{tikzpicture}[yscale=.9,xscale=1.2]

%% first subfigure

%% nodes

\node[shape=circle,draw=black,inner sep=0pt, minimum size=4pt, label=left:$a$] (a1) at (0,4) {};
\node[shape=circle,draw=black,inner sep=0pt, minimum size=4pt, label=right:$b$](b1) at (1.5,4){};
\node[shape=circle,draw=black,inner sep=0pt, minimum size=4pt, label=left:$a_1$] (a2) at (0,2) {};
\node[shape=circle,draw=black,inner sep=0pt, minimum size=4pt, label=right:$b_1$](b2) at (1.5,2){};

%% lines
\path[-,line width=1pt, blue] (a1) edge  (b1);
\path[-,line width=1pt,decoration={zigzag,segment length=3,amplitude=.5,post=lineto,post length=2pt}, red] (a1) edge[decorate] (b2);
\path[-,line width=1pt, blue] (a2) edge (b2);
\path[-,line width=1pt,decoration={zigzag,segment length=3,amplitude=.5,post=lineto,post length=2pt}, red] (a2) edge[decorate] (b1);

%% special line
\path[-,line width=6pt, blue, opacity=.15] (a1) edge  (b1);
  
%% arrows 
\path[-,line width=1pt, decoration={markings, mark=at position .3 with {\arrow{latex}}}, blue] (a1) edge[decorate] (b1);

%% second subfigure  
  
\begin{scope}[xshift=3.5cm]

\node[](c) at (-1,3) {$\implies$};

%% nodes

\node[shape=circle,draw=black,inner sep=0pt, minimum size=4pt, label=left:$a$] (a1) at (0,4) {};
\node[shape=circle,draw=black,inner sep=0pt, minimum size=4pt, label=right:$b$](b1) at (1.5,4){};
\node[shape=circle,draw=black,inner sep=0pt, minimum size=4pt, label=left:$a_1$] (a2) at (0,2) {};
\node[shape=circle,draw=black,inner sep=0pt, minimum size=4pt, label=right:$b_1$](b2) at (1.5,2){};

%% lines
\path[-,line width=1pt, blue] (a1) edge  (b1);
\path[-,line width=1pt,decoration={zigzag,segment length=3,amplitude=.5,post=lineto,post length=2pt}, red] (a1) edge[decorate] (b2);
\path[-,line width=1pt, blue] (a2) edge (b2);
\path[-,line width=1pt,decoration={zigzag,segment length=3,amplitude=.5,post=lineto,post length=2pt}, red] (a2) edge[decorate] (b1);

%% special line
\path[-,line width=6pt,decoration={zigzag,segment length=3,amplitude=.5,post=lineto,post length=2pt}, red, opacity=.15] (a2) edge[decorate]  (b1);  
  
%% arrows 

\path[-,line width=1pt, decoration={markings, mark=at position .25 with {\arrow{latex}}}, blue] (a1) edge[decorate] (b1);
\path[-,line width=1pt, decoration={markings, mark=at position .2 with {\arrow{latex}}}, red] (b1) edge[decorate] (a2);
\end{scope}

%% third subfigure  
  
\begin{scope}[xshift=7cm]

\node[](c) at (-1,3) {$\implies$};

%% nodes

\node[shape=circle,draw=black,inner sep=0pt, minimum size=4pt, label=left:$a$] (a1) at (0,4) {};
\node[shape=circle,draw=black,inner sep=0pt, minimum size=4pt, label=right:$b$](b1) at (1.5,4){};
\node[shape=circle,draw=black,inner sep=0pt, minimum size=4pt, label=left:$a_1$] (a2) at (0,2) {};
\node[shape=circle,draw=black,inner sep=0pt, minimum size=4pt, label=right:$b_1$](b2) at (1.5,2){};

%% lines
\path[-,line width=1pt, blue] (a1) edge  (b1);
\path[-,line width=1pt,decoration={zigzag,segment length=3,amplitude=.5,post=lineto,post length=2pt}, red] (a1) edge[decorate] (b2);
\path[-,line width=1pt, blue] (a2) edge (b2);
\path[-,line width=1pt,decoration={zigzag,segment length=3,amplitude=.5,post=lineto,post length=2pt}, red] (a2) edge[decorate] (b1);

%% special line
\path[-,line width=6pt, blue, opacity=.15] (a2) edge  (b2);
  
%% arrows 

\path[-,line width=1pt, decoration={markings, mark=at position .25 with {\arrow{latex}}}, blue] (a1) edge[decorate] (b1);
%\path[-,line width=1pt, decoration={markings, mark=at position .2 with {\arrow{latex}}}, red] (b2) edge[decorate] (a1);
\path[-,line width=1pt, decoration={markings, mark=at position .25 with {\arrow{latex}}}, blue] (a2) edge[decorate] (b2);
\path[-,line width=1pt, decoration={markings, mark=at position .2 with {\arrow{latex}}}, red] (b1) edge[decorate] (a2);
\end{scope}

%% forth subfigure  
  
\begin{scope}[xshift=10.5cm]

\node[](c) at (-1,3) {$\implies$};

%% nodes

\node[shape=circle,draw=black,inner sep=0pt, minimum size=4pt, label=left:$a$] (a1) at (0,4) {};
\node[shape=circle,draw=black,inner sep=0pt, minimum size=4pt, label=right:$b$](b1) at (1.5,4){};
\node[shape=circle,draw=black,inner sep=0pt, minimum size=4pt, label=left:$a_1$] (a2) at (0,2) {};
\node[shape=circle,draw=black,inner sep=0pt, minimum size=4pt, label=right:$b_1$](b2) at (1.5,2){};

%% lines
\path[-,line width=1pt, blue] (a1) edge  (b1);
\path[-,line width=1pt,decoration={zigzag,segment length=3,amplitude=.5,post=lineto,post length=2pt}, red] (a1) edge[decorate] (b2);
\path[-,line width=1pt, blue] (a2) edge (b2);
\path[-,line width=1pt,decoration={zigzag,segment length=3,amplitude=.5,post=lineto,post length=2pt}, red] (a2) edge[decorate] (b1);

%% arrows 

\path[-,line width=1pt, decoration={markings, mark=at position .25 with {\arrow{latex}}}, blue] (a1) edge[decorate] (b1);
\path[-,line width=1pt, decoration={markings, mark=at position .2 with {\arrow{latex}}}, red] (b2) edge[decorate] (a1);
\path[-,line width=1pt, decoration={markings, mark=at position .25 with {\arrow{latex}}}, blue] (a2) edge[decorate] (b2);
\path[-,line width=1pt, decoration={markings, mark=at position .2 with {\arrow{latex}}}, red] (b1) edge[decorate] (a2);
\end{scope}

\end{tikzpicture}
\caption{Visualizing the proof of Lemma \ref{lemma:pref}. Here, the connected component $J$ is a cycle on four nodes. The edges of $M_1$ are marked by straight blue lines, and the edges of $M_2$ by red zigzags. So here $a_1 = M_2(b)$ and $b_1 = M_1(M_2(b))$. For each node $w\in \{a,b,a_1,b_1\}$, the arrow at $w$ points towards the most preferred node in $\{M_1(w), M_2(w)\}$ with respect to $>_w$. }
\label{fig:lemma_pref}
\end{figure}
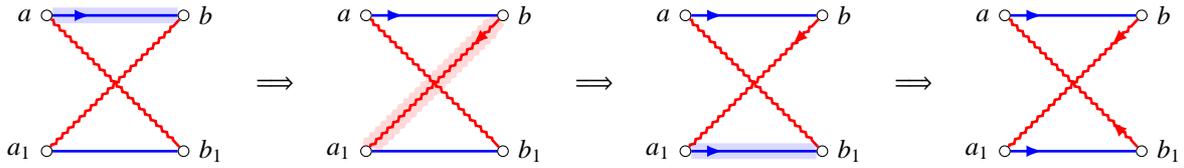

The next Lemma is equivalent to Theorem 2.16 in \cite{roth1992two}. In the interest of self-containment we provide a short elementary proof below.
\begin{lemma}\label{lemma:sym_stable} 
  Let $M_1$ and $M_2$ be two stable matchings in $G$. Let $J_1$ be
  those connected components of $M_1 \triangle M_2$ 
  that satisfy
  \eqref{eq:pref} for $i=1$ and $j=2$ (i.e., $\cA$ nodes prefer
  $M_1$ edges); let $J_2$ be all
  remaining connected components of $M_1 \triangle M_2$. 
  Then both $M'_1 = M_1\triangle E(J_1)$ and
  $M'_2=M_1\triangle E(J_2)$ are stable matchings in $G$.
\end{lemma}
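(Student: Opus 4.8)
The plan is to first recast $M'_1$ and $M'_2$ in a pointwise, node-by-node form, and then verify stability by ruling out blocking edges. Since the connected components of $M_1 \triangle M_2$ are vertex-disjoint alternating paths and cycles, taking the symmetric difference of the matching $M_1$ with the edges of any sub-collection of these components again yields a matching; in particular both $M'_1$ and $M'_2$ are matchings, and it remains only to identify each node's partner. On a component of $J_1$ the operation $M_1 \triangle E(J_1)$ replaces the $M_1$ edges by the $M_2$ edges, while on a component of $J_2$ it keeps the $M_1$ edges. Combining this with the defining property \eqref{eq:pref} of $J_1$ and $J_2$, I would show that for every $a \in \cA$ the node $M'_1(a)$ is the \emph{less} preferred of $M_1(a)$ and $M_2(a)$ with respect to $>_a$, while for every $b \in \cB$ the node $M'_1(b)$ is the \emph{more} preferred of $M_1(b)$ and $M_2(b)$ with respect to $>_b$. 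The matching $M'_2$ admits the symmetric description, with the roles of ``more'' and ``less preferred'' (equivalently, of $\cA$ and $\cB$) interchanged.

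With this description in hand, I would prove stability of $M'_1$ by contradiction. Suppose some edge $ab \in E$, with $a \in \cA$ and $b \in \cB$, blocks $M'_1$; that is, $M'_1$ satisfies none of the three alternatives in \eqref{eq:stability_def} for $e = ab$, so that $b >_a M'_1(a)$ and $a >_b M'_1(b)$. Because $M'_1(b)$ is the more preferred of $M_1(b)$ and $M_2(b)$, the inequality $a >_b M'_1(b)$ forces $a >_b M_1(b)$ and $a >_b M_2(b)$ simultaneously. Now I invoke stability of $M_1$ and of $M_2$ separately: applying \eqref{eq:stability_def} to the edge $ab$ for $M_1$, the alternatives $M_1(b) >_b a$ and $ab \in M_1$ are both excluded by $a >_b M_1(b)$, so $M_1(a) >_a b$ must hold; the identical argument for $M_2$ gives $M_2(a) >_a b$. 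But then both $M_1(a)$ and $M_2(a)$ are preferred to $b$ by $a$, so their less preferred one, which is $M'_1(a)$, also satisfies $M'_1(a) >_a b$, contradicting $b >_a M'_1(a)$. Hence $M'_1$ has no blocking edge and is stable.

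The argument for $M'_2$ is completely analogous, with the inequalities flipped between $\cA$ and $\cB$: assuming $ab$ blocks $M'_2$, the condition $b >_a M'_2(a)$ forces $b >_a M_1(a)$ and $b >_a M_2(a)$, whence stability of $M_1$ and $M_2$ yields $M_1(b) >_b a$ and $M_2(b) >_b a$, and since $M'_2(b)$ is the less preferred of these it satisfies $M'_2(b) >_b a$, contradicting $a >_b M'_2(b)$. I expect the only genuine subtlety to lie in the first paragraph, namely translating the component-wise symmetric-difference definition into the clean pointwise ``less/more preferred'' description; this is exactly where Lemma~\ref{lemma:pref} and the partition into $J_1$ and $J_2$ carry the weight. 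Once that description is in place, the stability check reduces to the short two-case application of \eqref{eq:stability_def} above, so the main obstacle is conceptual rather than computational.
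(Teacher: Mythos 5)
Your proof is correct, but it is organized differently from the paper's. The paper argues directly by contradiction on a putative blocking edge $ab$ of $M'_1$: it case-analyzes where $a$ and $b$ sit relative to $V(J_1)$ and $V(J_2)$ (both outside, both in the same part, or split across the two parts), and in each case exhibits one specific matching among $M_1, M_2$ that the same edge $ab$ would block, contradicting its stability. You instead first establish the pointwise ``lattice'' description --- $M'_1$ gives every $\cA$-node the less preferred of $\{M_1(a),M_2(a)\}$ and every $\cB$-node the more preferred, with $M'_2$ the reverse --- and then run a single, case-free blocking-edge argument that invokes stability of \emph{both} $M_1$ and $M_2$ simultaneously. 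This is essentially the classical Conway-style meet/join argument behind Theorem 2.16 of \cite{roth1992two}, which the paper cites as equivalent but deliberately avoids reproving. Each route has its advantages: yours yields the stronger structural fact (the pointwise optimal/pessimal description of $M'_1$ and $M'_2$) as a byproduct, and the stability check then needs no case analysis; the paper's avoids the bookkeeping your first paragraph requires, namely verifying that for every node of a component of $M_1\triangle M_2$ all of its $M_1$- and $M_2$-edges lie inside that component (so that $M'_1$ really does agree with $M_2$ on $J_1$ and with $M_1$ elsewhere), and that by Lemma~\ref{lemma:pref} every component of $J_2$ satisfies~\eqref{eq:pref} with $i=2$, $j=1$. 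You correctly flag that step as the crux; it is routine but should be written out, since it is the only place where your argument could silently go wrong.
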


\begin{proof}
  For contradiction assume
  that one of the matchings $M'_1$ and $M'_2$ is not stable; w.l.o.g. assume that
  $M'_1$ does not satisfy~\eqref{eq:stability_def} for some edge
  $ab\in E$ with $a\in\mathcal{A}$ and $b\in\mathcal{B}$. For visualization of the proof see Figure~\ref{fig:sym_stable}.

  Since $M_1$ and $M_2$ are stable, $a$ and $b$ lie in $V(J_1)\cup V(J_2)$. Otherwise $M'_1(a)=M_1(a)$,  $M'_1(b)=M_1(b)$ or  $M'_1(a)=M_2(a)$,  $M'_1(b)=M_2(b)$, and thus one of $M_1, M_2$ also violates~\eqref{eq:stability_def} for edge $ab$. Similarly, $a$ and $b$ cannot both lie in $V(J_1)$ or both in  $V(J_2)$. Suppose first that $a \in V(J_1)$ and $b \in V(J_2)$. In this case $M'_1(a)=M_2(a)$ and
  $M'_1(b) = M_1(b) >_b M_2(b)$. Thus $M_2$
  violates~\eqref{eq:stability_def} for edge $ab$. 

  If $a\in V(J_2)$
  and $b\in V(J_1)$, then $M'_1(a)=M_1(a)$ and $M'_1(b)>_b M_1(b)$,
  and hence $M_1$ violates~\eqref{eq:stability_def} for edge $ab$,
  contradiction.
\end{proof}

\begin{figure}[H]
\centering
\begin{tikzpicture}

% nodes

\node[shape=circle,draw=black,inner sep=0pt, minimum size=4pt, label=left:$b_1$] (a1) at (0,4) {};
\node[shape=circle,draw=black,inner sep=0pt, minimum size=4pt, label=right:$a_1$](b1) at (2,4){};
\node[shape=circle,draw=black,inner sep=0pt, minimum size=4pt, label=left:$b_2$] (a2) at (0,2) {};
\node[shape=circle,draw=black,inner sep=0pt, minimum size=4pt, label=right:$a$](b2) at (2,2){};
\node[shape=circle,draw=black,inner sep=0pt, minimum size=4pt, label=left:$b$] (a3) at (5,4) {};
\node[shape=circle,draw=black,inner sep=0pt, minimum size=4pt, label=right:$a_3$](b3) at (7,4){};
\node[shape=circle,draw=black,inner sep=0pt, minimum size=4pt, label=left:$b_4$] (a4) at (5,2) {};
\node[shape=circle,draw=black,inner sep=0pt, minimum size=4pt, label=right:$a_4$](b4) at (7,2){};

%% lines
\path[-,line width=1pt, blue] (a1) edge  (b1);
\path[-,line width=1pt,decoration={zigzag,segment length=3,amplitude=.5,post=lineto,post length=2pt}, red] (a1) edge[decorate] (b2);
\path[-,line width=1pt, blue] (a2) edge (b2);
\path[-,line width=1pt,decoration={zigzag,segment length=3,amplitude=.5,post=lineto,post length=2pt}, red] (a2) edge[decorate] (b1);

\path[-,line width=1pt, blue] (a3) edge  (b3);
\path[-,line width=1pt,decoration={zigzag,segment length=3,amplitude=.5,post=lineto,post length=2pt}, red] (a3) edge[decorate] (b4);
\path[-,line width=1pt, blue] (a4) edge (b4);
\path[-,line width=1pt,decoration={zigzag,segment length=3,amplitude=.5,post=lineto,post length=2pt}, red] (a4) edge[decorate] (b3);
  
%% arrows 

\path[-,line width=1pt, decoration={markings, mark=at position .25 with {\arrow{latex}}}, blue] (b1) edge[decorate] (a1);
\path[-,line width=1pt, decoration={markings, mark=at position .2 with {\arrow{latex}}}, red] (a1) edge[decorate] (b2);
\path[-,line width=1pt, decoration={markings, mark=at position .25 with {\arrow{latex}}}, blue] (b2) edge[decorate] (a2);
\path[-,line width=1pt, decoration={markings, mark=at position .2 with {\arrow{latex}}}, red] (a2) edge[decorate] (b1);

\path[-,line width=1pt, decoration={markings, mark=at position .25 with {\arrow{latex}}}, blue] (a3) edge[decorate] (b3);
\path[-,line width=1pt, decoration={markings, mark=at position .2 with {\arrow{latex}}}, red] (b4) edge[decorate] (a3);
\path[-,line width=1pt, decoration={markings, mark=at position .25 with {\arrow{latex}}}, blue] (a4) edge[decorate] (b4);
\path[-,line width=1pt, decoration={markings, mark=at position .2 with {\arrow{latex}}}, red] (b3) edge[decorate] (a4);

%% symmetric difference M_1 and J_1

%% special lines
\path[-,line width=2pt,  opacity=.15] (b2) edge  (a3);

\path[-,line width=6pt,  opacity=.15,decoration={zigzag,segment length=3,amplitude=.5,post=lineto,post length=2pt}, red] (a1) edge[decorate] (b2);
\path[-,line width=6pt,  opacity=.15,decoration={zigzag,segment length=3,amplitude=.5,post=lineto,post length=2pt}, red] (a2) edge[decorate] (b1);
\path[-,line width=6pt,  opacity=.15, blue] (a3) edge  (b3);
\path[-,line width=6pt,  opacity=.15, blue] (a4) edge (b4);

%% J_1, J_2
\draw[very thick, opacity=0.2, dashed, rounded corners=20pt] (-.7,1.3)rectangle (2.7,4.7);
\draw[very thick, opacity=0.2, dashed, rounded corners=20pt] (4.3,1.3)rectangle (7.7,4.7);
\node[] (J1) at (-1.3,4) {$J_1$};
\node[] (J2) at (8.3,4) {$J_2$};

\end{tikzpicture}
\caption{Visualizing the proof of Lemma \ref{lemma:sym_stable}.  Here, both $J_1$ and $J_2$ consist of one cycle on four nodes. The edges of $M_1$ are marked by blue straight lines, and the edges of $M_2$ by red zigzags; the edges of $M_1\triangle  E(J_1)$ are the highlighted edges of $M_1$ and $M_2$. For each node $w$, the arrow at $w$ points towards the most preferred node in $\{M_1(w), M_2(w)\}$ with respect to $>_w$. 
The figure illustrates the case, when the matching $M'_1=M_1\triangle E(J_1)$ violates~\eqref{eq:stability_def} for the edge $\{a,b\}$ with $a\in V(J_1)$
  and $b\in V(J_2)$. So here $M'_1(a)=M_2(a)=b_1$ and $M'_1(b)=M_1(b)=a_3>_{b} a_4=M_2(b)$. In this case, $M_2$ violates~\eqref{eq:stability_def} for the edge $\{a,b\}$ as well, contradiction.}
\label{fig:sym_stable}
\end{figure}
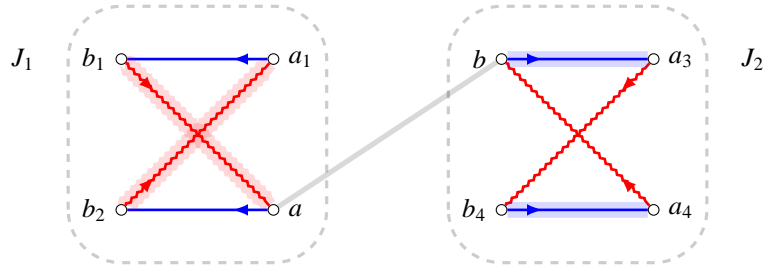

%\section{Stable Matching Polytope}

\begin{definition}Let us define the \emph{stable matching polytope} $P(G)\subseteq\R^E$ for graph $G$ as follows
$$
	P(G):=\conv\{\chi(M)\in\R^E: M \text{ is a stable matching in } G\}\,.
$$
By~\cite{gale1962college}, $P(G)$ is a nonempty polytope because every
graph $G$ has a stable matching.
\end{definition}
Clearly, the vertices of $P(G)$ are in one-to-one correspondence with
stable matchings in $G$. Moreover, Lemma~\ref{lemma:sym_stable} helps
to understand what pairs of stable matchings in $G$ do not correspond
to edges of $P(G)$.

\begin{lemma}\label{lemma:edge}
  Let $M_1$ and $M_2$ be two stable matchings in $G$ which define an
  edge of the polytope~$P(G)$. 
  Then all connected components in
  $M_1\triangle M_2$ satisfy~\eqref{eq:pref} for unique choice of $i$
  and $j$. 
\end{lemma}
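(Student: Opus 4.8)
The plan is to argue by contradiction using Lemma~\ref{lemma:sym_stable}. Suppose the components of $M_1\triangle M_2$ do \emph{not} all satisfy~\eqref{eq:pref} for one common choice of $i,j$. In the notation of Lemma~\ref{lemma:sym_stable} this says precisely that both $J_1$ and $J_2$ are nonempty, and the lemma then hands us two stable matchings $M'_1=M_1\triangle E(J_1)$ and $M'_2=M_1\triangle E(J_2)$, so that $\chi(M'_1),\chi(M'_2)$ are vertices of $P(G)$. I would first record that both are distinct from $M_1$ and $M_2$: since $E(J_1)$ and $E(J_2)$ partition the nonempty edge set $M_1\triangle M_2$ into two nonempty parts, flipping along $E(J_1)$ genuinely alters $M_1$ (so $M'_1\neq M_1$) yet cannot reach $M_2$ (which would require $E(J_2)=\varnothing$); hence $M'_1\notin\{M_1,M_2\}$, and symmetrically $M'_2\notin\{M_1,M_2\}$.

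The decisive step is a bookkeeping identity on incidence vectors, namely
$$
\chi(M_1)+\chi(M_2)=\chi(M'_1)+\chi(M'_2)\,.
$$
I would verify this edge by edge. An edge $e\notin M_1\triangle M_2$ is untouched by both flips, so it is counted identically on the two sides. An edge $e$ lying in a component of $J_1$ (respectively $J_2$) is flipped by exactly one of $M'_1,M'_2$ and left fixed by the other, so the unordered pair $\{M'_1,M'_2\}$ covers $e$ exactly as often as $\{M_1,M_2\}$ does. Summing over all edges yields the identity, which says that the segments $[\chi(M_1),\chi(M_2)]$ and $[\chi(M'_1),\chi(M'_2)]$ share the common midpoint $m:=\tfrac12\big(\chi(M_1)+\chi(M_2)\big)$.

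To finish I would invoke the defining property of a face. Because $M_1,M_2$ span an edge $F$ of $P(G)$, the midpoint $m$ lies in the relative interior of $F$; expressing $m=\tfrac12\chi(M'_1)+\tfrac12\chi(M'_2)$ as a convex combination of points of $P(G)$ then forces $\chi(M'_1),\chi(M'_2)\in F$. But a $1$-dimensional face has exactly two extreme points, namely $\chi(M_1)$ and $\chi(M_2)$, so these are the only vertices of $P(G)$ on $F$; hence $\{M'_1,M'_2\}\subseteq\{M_1,M_2\}$, contradicting the previous paragraph. I expect the only delicate point to be stating the incidence-vector identity cleanly across the different edge types; once it is in place, the face property of $P(G)$ closes the argument in one line.
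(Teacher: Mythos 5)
Your proposal is correct and follows essentially the same route as the paper: assume both $J_1$ and $J_2$ are nonempty, invoke Lemma~\ref{lemma:sym_stable} to get the stable matchings $M_1\triangle E(J_1)$ and $M_1\triangle E(J_2)$, establish the identity $\chi(M_1)+\chi(M_2)=\chi(M_1'}+\chi(M_2')$ (the paper states it as an equality of midpoints), and derive a contradiction from the fact that a point in the relative interior of an edge of $P(G)$ admits only one convex representation by vertices. Your write-up merely makes explicit two steps the paper leaves implicit --- the edge-by-edge verification of the identity and the face argument forcing $\chi(M_1'),\chi(M_2')$ onto the edge --- so it is a correct, slightly more detailed rendering of the same proof.
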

\begin{proof}
  Suppose for contradiction that the statement of the lemma does not
  hold. Hence the sets $J_1$ and $J_2$ are both nonempty in
  Lemma~\ref{lemma:sym_stable}, and we obtain stable matchings
  $M_1\triangle E(J_1)$ and $M_1\triangle E(J_2)$ that are different from $M_1$,
  $M_2$. 
  We also have 
  \[ \frac{1}{2}\chi\big(M_1\triangle E(J_1)\big)+\frac{1}{2}\chi\big(M_1\triangle
  E(J_2)\big) =\frac{1}{2}\chi\big(M_1\big)+\frac{1}{2}\chi\big(M_2\big),\]
  and hence there are two distinct convex combinations of the midpoint
  of the edge between $M_1$ and $M_2$; a contradiction. 
\end{proof}

The next Corollary can be obtained from Ratier's characterization of edges of the stable matching polytope~\cite{ratier1996stable}.

\begin{corollary}\label{cor:edge}
Let $M_1$ and $M_2$ be two stable matchings in $G$ such that
\begin{equation}\label{eq:edge}
M_1\cap\delta^{>a}(b)\neq\varnothing,\
M_1\cap\delta^{>b}(a)\neq\varnothing\quad\text{and}\quad
M_2\cap\big(\delta^{>a}(b)\cup \delta^{>b}(a)\big)=\varnothing
\end{equation}
 for some $a\in\mathcal{A}$ and $b\in\mathcal{B}$. Then, $M_1$ and $M_2$ do not define an edge of the polytope~$P(G)$.
\end{corollary}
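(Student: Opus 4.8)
The plan is to translate the combinatorial conditions in~\eqref{eq:edge} into plain preference inequalities, and then to show that the nodes $a$ and $b$ force two \emph{incompatible} orientations on the connected components of $M_1\triangle M_2$, which by Lemma~\ref{lemma:edge} is impossible for an edge of $P(G)$.

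First I would unwind the definition of $\delta^{>u}(v)$. Since $M_1$ is a matching, the only edge of $M_1$ meeting $b$ is $\{b,M_1(b)\}$, so $M_1\cap\delta^{>a}(b)\neq\varnothing$ says exactly $M_1(b)>_b a$; likewise $M_1\cap\delta^{>b}(a)\neq\varnothing$ says $M_1(a)>_a b$. The condition $M_2\cap\big(\delta^{>a}(b)\cup\delta^{>b}(a)\big)=\varnothing$ says that $M_2$ places no edge at $b$ preferred to $a$ and no edge at $a$ preferred to $b$; by totality of the orders (with $\varnothing$ least preferred) this means $a=M_2(b)$ or $a>_b M_2(b)$, and $b=M_2(a)$ or $b>_a M_2(a)$. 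Chaining these, from $M_1(a)>_a b$ together with $b\ge_a M_2(a)$ I get the strict inequality $M_1(a)>_a M_2(a)$, and symmetrically $M_1(b)>_b a$ with $a\ge_b M_2(b)$ gives $M_1(b)>_b M_2(b)$. Because both are strict, $M_1(a)\neq M_2(a)$ and $M_1(b)\neq M_2(b)$, so $a$ and $b$ each lie in some nontrivial connected component of $M_1\triangle M_2$.

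Now I would apply Lemma~\ref{lemma:pref} to each of these components. For the component containing the $\cA$-node $a$, the relation $M_1(a)>_a M_2(a)$ forces the orientation $\{i,j\}=\{1,2\}$ with $i=1$, i.e.\ $(i,j)=(1,2)$. For the component containing the $\cB$-node $b$, the relation $M_1(b)>_b M_2(b)$ (a $\cB$-node preferring its $M_1$ partner) forces the orientation $(i,j)=(2,1)$; in particular this already shows that $a$ and $b$ lie in \emph{different} components. These two orientations disagree, so the components of $M_1\triangle M_2$ cannot all satisfy~\eqref{eq:pref} for one common choice of $i$ and $j$. By the contrapositive of Lemma~\ref{lemma:edge}, $M_1$ and $M_2$ do not define an edge of $P(G)$, as desired.

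I expect the only delicate step to be the bookkeeping in the first translation: one must keep the non-strict inequalities ($a\ge_b M_2(b)$, $b\ge_a M_2(a)$) pointed in the right direction and handle the possibility that $M_2(a)$ or $M_2(b)$ equals $\varnothing$, so that the subsequent chaining genuinely yields strict preferences. Everything after that is a clean appeal to Lemma~\ref{lemma:pref} and Lemma~\ref{lemma:edge}, the key conceptual point being that Lemma~\ref{lemma:edge} demands a single global orientation across all components, which the $\cA$-node $a$ and the $\cB$-node $b$ jointly forbid.
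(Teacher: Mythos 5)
Your proof is correct and follows essentially the same route as the paper's: condition \eqref{eq:edge} makes both $a$ and $b$ strictly prefer $M_1$ to $M_2$, which forces the components containing $a$ and $b$ to carry opposite orientations under Lemma~\ref{lemma:pref} (equivalently, both $J_1$ and $J_2$ of Lemma~\ref{lemma:sym_stable} are nonempty), so the contrapositive of Lemma~\ref{lemma:edge} applies. The only difference is that you spell out the translation of \eqref{eq:edge} into preference inequalities and the strictness needed to place $a$ and $b$ in nontrivial components of $M_1\triangle M_2$, details the paper leaves implicit.
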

\begin{proof}
  Condition \eqref{eq:edge} implies that both $a$ and $b$ prefer
  $M_1$ over $M_2$. Hence, both sets $J_1$ and $J_2$ as given in Lemma
  \ref{lemma:sym_stable} must be non-empty. An application of Lemma
  \ref{lemma:edge} completes the proof of the corollary.
\end{proof}

\section{Linear Description}\label{section:theorem}
Let us define $Q(G)\subseteq\R^E$ to be the polytope described by the
following linear constraints
\begin{align}
  x(\delta(v)) \leq 1\,,\quad \forall v \in V\qquad \text{and} \qquad x_e \geq 0\,,\quad \forall e \in E\,,\label{eq:lin_descr_match}\\
  x(\delta^{>a}(b))+ x(\delta^{>b}(a)) + x_{ab} \geq 1\,, \quad \forall ab \in E \label{eq:lin_descr_stab}
\end{align}
where $x(J) := \sum_{e \in J} x_e$ for any $J \subseteq E$.

Clearly, $P(G)\subseteq Q(G)$ because for every stable matching $M$ in
$G$ the point $x:=\chi(M)$ satisfies~\eqref{eq:lin_descr_match} and
by~\eqref{eq:stability_def} the point $x$ also
satisfies~\eqref{eq:lin_descr_stab}. On the other hand, every integral
point in $Q(G)$ equals $\chi(M)$ for some stable matching $M$
in~$G$. In the remaining part of the paper we show that every vertex
of $Q(G)$ is integral, thus proving the main theorem.

\begin{theorem}
	For every graph $G$ the polytope $P(G)$ equals $Q(G)$.
\end{theorem}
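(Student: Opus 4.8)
The plan is to establish the only missing inclusion, $Q(G)\subseteq P(G)$, by showing that every vertex of $Q(G)$ is integral; since the excerpt already observes that every integral point of $Q(G)$ is the incidence vector of a stable matching, this suffices. I would argue by induction on $|E|$, with the engine of the induction being a key lemma: \emph{every vertex $x$ of $Q(G)$ has at least one coordinate $x_e\in\{0,1\}$}. Granting this lemma, the theorem follows by a short reduction to a graph with fewer edges.

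For the reduction, let $x$ be a vertex of $Q(G)$ and let $e=uv$ realize $x_e\in\{0,1\}$. The key observation is that dropping a zero-valued edge from any of the sums in \eqref{eq:lin_descr_match}--\eqref{eq:lin_descr_stab} leaves that sum unchanged. Hence if $x_{uv}=0$, the restriction of $x$ to $E\setminus\{uv\}$ lies in $Q(G-uv)$ (with preferences inherited by restriction); as a vertex of $Q(G)$ lying on the face $Q(G)\cap\{x_{uv}=0\}$, this restriction is a vertex of that face, which is a copy of $Q(G-uv)$, so by induction it is integral and therefore so is $x$. If instead $x_{uv}=1$, the matching inequalities \eqref{eq:lin_descr_match} at $u$ and $v$ force $x_f=0$ on every other edge $f$ meeting $u$ or $v$; deleting $u$ and $v$ then discards only zero-valued coordinates, and the same observation shows the projection of $x$ is a vertex of $Q(G-u-v)$, hence integral by induction. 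Together with $x_{uv}=1$ and the forced zeros this makes $x$ integral.

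It remains to prove the key lemma, and this is where the real work lies. I would argue by contradiction: suppose $0<x_e<1$ for every $e\in E$, and exhibit a nonzero direction $d\in\R^E$ with $x\pm\varepsilon d\in Q(G)$ for all small $\varepsilon>0$, contradicting extremality. Since every coordinate is strictly interior, the nonnegativity constraints are slack, so $d$ need only preserve the tight inequalities: $d(\delta(v))=0$ at each node $v$ with a tight matching inequality, and $\langle g_{ab},d\rangle=0$ at each edge $ab$ with a tight stability inequality, where $g_{ab}$ is the gradient of \eqref{eq:lin_descr_stab}. Rewriting that gradient through the preference orders, $g_{ab}$ is supported on the edges at $b$ to neighbours at least as good as $a$ together with the edges at $a$ to neighbours strictly better than $b$, that is, on a \emph{prefix} of the incident edges at each endpoint in that node's preference order. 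A natural candidate is to take $d$ alternating $+1/-1$ along a cycle of the (fully supported) graph, chosen so that at every visited node the two cycle edges are consecutive in that node's preference order; then at each node both cycle edges lie on the same side of every preference threshold, so their opposite signs cancel in $d(\delta(v))$ and in every $\langle g_{ab},d\rangle$ simultaneously, making $d$ feasible.

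The crux, which I expect to be genuinely delicate, is guaranteeing that such a locally consecutive cycle exists, or more robustly arguing on structural grounds that the tight matching and stability constraints can never attain full rank $|E|$ at an all-fractional point. This is exactly the local analysis of extreme points advertised in the introduction: one must exploit that the tight stability constraints at a fixed node are nested prefixes in its preference order, so they contribute few independent rows, and combine this with the bipartite incidence pattern of the matching constraints to force a dimension deficiency. I would expect Lemma~\ref{lemma:sym_stable} and Corollary~\ref{cor:edge} to re-enter here, to rule out the configurations in which the candidate perturbation fails to close up consistently with the preferences.
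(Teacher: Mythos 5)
Your skeleton (a key lemma that every vertex of $Q(G)$ has a $\{0,1\}$-coordinate, plus induction on $|E|$) is the same as the paper's, but both halves of your argument have gaps, and the one in the induction step is fatal as written. The face $Q(G)\cap\{x\in\R^E:x_{uv}=0\}$ is \emph{not} a copy of $Q(G-uv)$: deleting the edge $uv$ from the graph also deletes the stability constraint \eqref{eq:lin_descr_stab} for the pair $uv$, yet on the face that constraint survives as $x(\delta^{>u}(v))+x(\delta^{>v}(u))\ge 1$, an inequality that is not among the constraints of $Q(G-uv)$ (in $G-uv$ the pair $u,v$ is no longer acceptable, so nothing forbids leaving both unmatched to better partners). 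Hence the projected face equals $Q(G-uv)$ intersected with an extra half-space, and a vertex of that intersection need not be a vertex of $Q(G-uv)$; it can be the point where the bounding hyperplane crosses an \emph{edge} of $Q(G-uv)$. (Sanity check: if $uv$ is the only edge of $G$, the surviving constraint reads $0\ge 1$ and the face is empty, while $Q(G-uv)$ is a single point.) This is exactly where the paper's real work happens: by induction $Q(G-uv)=P(G-uv)$, so a fractional vertex of the face would lie on an edge of $P(G-uv)$ joining stable matchings $M_1,M_2$ with $|M_1\cap(\delta^{>u}(v)\cup\delta^{>v}(u))|=2$ and $M_2\cap(\delta^{>u}(v)\cup\delta^{>v}(u))=\varnothing$, i.e.\ a pair satisfying \eqref{eq:edge}; Corollary~\ref{cor:edge}, which is what Lemmas~\ref{lemma:pref}--\ref{lemma:edge} on symmetric differences exist to prove, rules out such an edge. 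You anticipated that Lemma~\ref{lemma:sym_stable} and Corollary~\ref{cor:edge} would re-enter somewhere, but you placed them inside the key lemma, where the paper never uses them; they are needed precisely to repair the step you claim is immediate.

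The key lemma itself is also not proved in your proposal. You reduce it to exhibiting a cycle whose two edges at every visited node are consecutive in that node's preference order, and you explicitly concede you cannot establish existence. Moreover, even granting existence, the node-local cancellation claim is false: a tight stability constraint can have its threshold fall exactly \emph{between} two consecutive edges. For instance, if the cycle uses $vw_1$ and $vw_2$ with $w_1>_v w_2$ consecutive, the stability constraint for the edge $vw_1$ itself has support $\{vw\in E: w\ge_v w_1\}$ at $v$, containing $vw_1$ but not $vw_2$, so the $\pm 1$ entries do not cancel at $v$; any cancellation must be arranged across both endpoints simultaneously, which your construction does not control. The paper avoids perturbations entirely: it takes $|E|$ linearly independent tight constraints defining the all-fractional vertex, indexed by a set $V_x$ of tight degree constraints and a set $E_x$ of tight stability constraints with $|V_x|$ maximized, shows that the edge from any $b$ to $N_{\max}(b)$ never lies in $E_x$ and, for $b\in V_x$, neither does the edge to $N_{\min}(b)$, and then a degree count yields $|E_x|+|V_x|<|E|$, a contradiction. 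Your perturbation idea might be completable, but as it stands this half is a sketch with an acknowledged hole, and the induction it feeds into breaks independently at the $x_{uv}=0$ case.
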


\begin{lemma}
	For every graph $G$ every vertex of the polytope $Q(G)$ is integral.
\end{lemma}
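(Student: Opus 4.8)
The plan is to prove the statement by induction on $|E|$, the engine being the claim highlighted in the abstract: every vertex $x$ of $Q(G)$ has a coordinate in $\{0,1\}$. Granting this claim, I would finish as follows. If $x_{ab}=1$ for some edge $ab$, then the matching inequalities \eqref{eq:lin_descr_match} at $a$ and $b$ together with $x\ge 0$ force every other edge at $a$ or $b$ to carry value $0$; deleting the nodes $a,b$ and their incident edges yields a smaller instance $G'$, and I would check that the restriction $x'$ of $x$ is a vertex of $Q(G')$, so that $x'$ — and hence $x$ — is integral by induction. The verification is a lifting argument: any way of writing $x'=\tfrac12(y'+z')$ in $Q(G')$ lifts back (set the coordinate $ab$ to $1$ and the remaining deleted coordinates to $0$) to a decomposition of $x$ in $Q(G)$, the one delicate point being the stability inequalities \eqref{eq:lin_descr_stab} of the deleted edges, which hold because the tight matching constraints and nonnegativity pin the relevant coordinates of $y',z'$ to those of $x'$.

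The case $x_{ab}=0$ is the more interesting reduction. Here I would delete only the edge $ab$, obtaining $x'\in Q(G-ab)$. If $x'$ happens to be a vertex of $Q(G-ab)$ we are done by induction. Otherwise $x'$ lies in the relative interior of a face of dimension at least one; since the only constraint of $Q(G)$ not already present in $Q(G-ab)$ is the stability inequality of $ab$, and this inequality is \emph{tight} at $x$ (with $x_{ab}=0$), a short argument shows this face must in fact be an \emph{edge} $[N_1,N_2]$ of $Q(G-ab)$. By the inductive hypothesis $Q(G-ab)=P(G-ab)$, so $N_1,N_2$ are stable matchings of $G-ab$ and, writing $x'=\lambda\chi(N_1)+(1-\lambda)\chi(N_2)$ with $\lambda\in(0,1)$, the tightness of the stability constraint of $ab$ leaves exactly two possibilities. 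Either both $N_1$ and $N_2$ satisfy \eqref{eq:stability_def} for $ab$ in $G$ — then both are stable in $G$ and $x$ is a proper convex combination of two distinct points of $P(G)\subseteq Q(G)$, contradicting that $x$ is a vertex — or one of them, say $N_1$, matches both $a$ and $b$ to strictly more preferred partners while $N_2$ matches neither above the other, which is precisely hypothesis \eqref{eq:edge} of Corollary \ref{cor:edge} and so contradicts that $[N_1,N_2]$ is an edge of $P(G-ab)=Q(G-ab)$. This is where the structural results (Lemmas \ref{lemma:pref}, \ref{lemma:sym_stable}, \ref{lemma:edge} through Corollary \ref{cor:edge}) enter the argument.

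It remains to prove the claim that a vertex cannot be \emph{fully fractional}. Suppose $0<x_e<1$ for every $e\in E$. Then no bound $x_e\ge 0$ is tight, so $x$ is determined by $|E|$ linearly independent tight constraints drawn from the matching family \eqref{eq:lin_descr_match} and the stability family \eqref{eq:lin_descr_stab}, read as equalities. My plan is to exhibit a nonzero direction $d\in\R^E$ lying in the common kernel of the normals of all these tight constraints; then $x\pm\epsilon d\in Q(G)$ for small $\epsilon>0$, contradicting extremality. Such a $d$ must satisfy $\sum_{e\ni v}d_e=0$ at every fully matched node $v$ and $d(\delta^{>a}(b))+d(\delta^{>b}(a))+d_{ab}=0$ across every tight stability constraint. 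I would construct $d$ from the support graph as a $\pm$-alternating pattern along an appropriate closed structure, oriented by the preference orders so that $\cA$-nodes and $\cB$-nodes shift weight in opposite directions along their preference lists; Lemmas \ref{lemma:pref} and \ref{lemma:sym_stable} are what guarantee this orientation is globally consistent and that each tight stability constraint sees a net change of zero.

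The main obstacle is exactly this last construction: producing a perturbation that simultaneously respects the tight matching constraints (easy, by working within each node's incident edges) \emph{and} every tight stability constraint (hard, because the threshold sets $\delta^{>a}(b)$ cut across the support in a preference-dependent way). Equivalently, the crux is to show that the tight matching and stability constraints cannot be linearly independent and span all of $\R^E$ when every coordinate is strictly between $0$ and $1$ — this is the combinatorial core of the proof and the step where the stable-matching structure, rather than generic polyhedral reasoning, is indispensable.
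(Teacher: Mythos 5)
Your second and third paragraphs (the induction with the two cases) are sound and essentially reproduce the paper's own argument: the case $x_{ab}=1$ via deletion of $a,b$ and a lifting argument, and the case $x_{ab}=0$ via deletion of the edge $ab$, the observation that $x'$ is either a vertex of $Q(G-ab)=P(G-ab)$ or lies in the relative interior of an edge $[\chi(N_1),\chi(N_2)]$ cut by the hyperplane of the stability constraint of $ab$, and then Corollary~\ref{cor:edge} applied to the configuration \eqref{eq:edge}. Your disposal of the subcase in which both $N_i$ meet $\delta^{>a}(b)\cup\delta^{>b}(a)$ exactly once --- both are then stable in $G$, so $x$ is a proper convex combination of two points of $Q(G)$ --- is a correct small variant of the paper, which instead rules this subcase out geometrically, noting that neither $\chi(N_i)$ can lie on the cutting hyperplane when $x'$ is a vertex of the intersected polytope.

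The genuine gap is the first part, and you concede it yourself: the claim that a vertex of $Q(G)$ cannot be fully fractional is left as a ``plan'' whose ``main obstacle is exactly this last construction.'' Since everything else hinges on this claim (it is precisely what the abstract advertises as the key feature), the proof is incomplete. Moreover, the tools you propose for it cannot work as stated: Lemmas~\ref{lemma:pref} and~\ref{lemma:sym_stable} are statements about pairs of \emph{stable matchings}, i.e.\ integral points; at a hypothetical fractional vertex you have no stable matchings in hand to apply them to, so they cannot ``orient'' a perturbation or certify that tight stability constraints see zero net change. (They do enter the proof, but only in your $x_{ab}=0$ reduction, where induction has already produced genuine stable matchings $N_1,N_2$.) The paper's actual argument for the claim is not a perturbation construction but a counting/rank argument on the tight constraints: among $|E|$ linearly independent tight constraints, choose a system maximizing the number $|V_x|$ of tight degree constraints. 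Then (i) $V_x\subsetneq V$, because the degree equalities over $\cA$ and over $\cB$ are linearly dependent; (ii) for every node $v$, the edge to its most preferred neighbour cannot lie in $E_x$, because full fractionality forces its stability row to coincide with the degree row at the other endpoint, and swapping would contradict maximality of $|V_x|$; (iii) for $v\in V_x$, the edge to its least preferred neighbour is excluded similarly. Double counting then yields
\[
|E_x|\le |E|-|V_x|-\tfrac{1}{2}|V\setminus V_x|<|E|-|V_x|,
\]
contradicting $|V_x|+|E_x|=|E|$. Even if you insist on the perturbation language, exhibiting your direction $d$ is equivalent to proving exactly this rank deficiency, and nothing in your proposal supplies that combinatorial step.
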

\begin{proof}
  We first claim that every vertex $x$ of $Q(G)$ satisfies $x_e
  \in \{0,1\}$ for at least one $e \in E$. Assume for contradiction
  that $0 < x_e < 1$ for all $e \in E$. 
  Like every vertex of $Q(G)$, $x$ is uniquely defined by $|E|$ linearly independent tight constraints describing $Q(G)$. 
  Since $x$ has no zero coordinate, we can assume
  that these constraints are the constraints $x(\delta(v))\le 1$ for $v\in V_x$ 
  and the constraints~\eqref{eq:lin_descr_stab} for $e\in E_x$, where
  $|V_x|+|E_x|=|E|$. Moreover, let us assume that we choose the $|E|$
  tight constraints so that $|V_x|$ is as large as possible.

The constraints $x(\delta(v))=1$, $v\in V$ are linearly dependent, since $\sum_{a\in\mathcal{A}}x(\delta(a))=\sum_{b\in\mathcal{B}}x(\delta(b))$ for every $x \in \R^E$. Hence, we have $V_x\subsetneq V$. On the other hand if $a=N_{\max}(b)$ for some $b \in V$ then $e:=ab\not\in E_x$. Indeed, $a=N_{\max}(b)$ implies $\delta^{>a}(b)=\varnothing$, then
$$
	1\le x(\delta^{>a}(b))+ x(\delta^{>b}(a)) + x_{ab}=x(\delta^{\ge b}(a)) \le x(\delta(a)) \le 1\,,
$$
showing that $\delta^{< b}(a)=\varnothing$. So by linear independence we cannot have both $e \in E_x$ and $a \in V_x$. Suppose for a contradiction $e \in E_x$. Then $a \not\in V_x$ and hence $|E_x \setminus \{e\}| + |V_x \cup \{a\}| = |E|$, and $|V_x \cup \{a\}| > |V_x|$. Moreover $E_x\setminus \{e\}$ and $V_x\cup \{a\}$ also define the vertex $x$. This contradicts the choice of $V_x$, $E_x$. Analogously, we can show that if $b\in V_x$  and $a=N_{\min}(b)$ then $e:=ab\not\in E_x$. Moreover, notice that $N_{\min}(v)\neq N_{\max}(v)$ for $v\in V_x$ since no coordinate of $x$ equals~$1$. Thus, 
\[
	|E_x|=\frac{1}{2}\sum_{v\in V} |\delta(v)\cap E_x|\le \frac{1}{2}\sum_{v\in V_x} (|\delta(v)|-2)+\frac{1}{2}\sum_{v\in V\setminus V_x} (|\delta(v)|-1)= |E|-|V_x|-\frac{1}{2}|V\setminus V_x|\,,
\]
which implies $|E_x|+|V_x|< |E|$, contradiction.

\bigskip

Now let us assume that $G$ is a graph with the minimum number of edges such that $Q(G)$ is not an integral polytope. Let $x$ be a non-integral vertex of $Q(G)$.

Case $x_{ab}=0$ for some $a\in \mathcal{A}$, $b\in\mathcal{B}$ and
$e:=ab\in E$. In this case, let $P'$ and $x'$ be obtained from $Q(G)\cap\{x\in\R^E: x_{ab}=0\}$ and $x$ by dropping
the coordinate corresponding to $ab$. Then,
$x'$ is a vertex of the polytope $P'$, as otherwise $x$ is not a vertex of $Q(G)$.  Let $G'$ be the graph with
$V(G')=V$ and $E(G') = E \setminus \{e\}$. Then 
$$P' = P(G') \cap \{x\in \R^{E(G')}: x(\delta^{>a}(b))+ x(\delta^{>b}(a))\geq1\}\,,$$
since $P(G')=Q(G')$ by our minimality assumption. Define $H'$ to be the hyperplane $\{x\in \R^{E(G')}: x(\delta^{>a}(b))+ x(\delta^{>b}(a))=1\}$.  Then every vertex
of $P'$ is either a vertex of $P(G')$ or the intersection of an edge
of $P(G')$ with the hyperplane $H'$. Since the vertices of $P(G')$ are
integral, it remains to consider
vertices of $P'$ at the intersection of $H'$ and an edge of $P(G')$. Such an
edge would be defined by distinct stable matchings $M_1$ and $M_2$, where the vertex of $P'$ under consideration is not $\chi(M_1)$ nor $\chi(M_2)$. Note, that none of $\chi(M_1)$, $\chi(M_2)$ lies on the hyperplane $H'$, since $x'$ is the unique common point of $H'$ and the line segment between $\chi(M_1)$ and $\chi(M_2)$. Thus, $|M_1 \cap \big(\delta^{>a}(b)\cup\delta^{>b}(a)\big)|\neq 1$ and $|M_2 \cap \big(\delta^{>a}(b)\cup\delta^{>b}(a)\big)|\neq 1$. On the other hand, the line segment between $\chi(M_1)$ and $\chi(M_2)$ has a nonempty intersection with $H'$, so w.l.o.g. we may assume that $|M_1 \cap \big(\delta^{>a}(b)\cup\delta^{>b}(a)\big)|=2$ and $|M_2 \cap \big(\delta^{>a}(b)\cup\delta^{>b}(a)\big)|=0$. Therefore, $M_1$ and $M_2$ satisfy \eqref{eq:edge} for the given edge $ab$. So
Corollary \ref{cor:edge} readily implies that $P(G')$ cannot
have an edge connecting $M_1$ and $M_2$.

Case $x_{ab}=1$ for some $a\in \mathcal{A}$, $b\in\mathcal{B}$. Let
$x'$ be obtained by dropping the coordinates corresponding to
$\delta(a)\cup\delta(b)$, and let $G'$ be the graph with
$V(G')=V\setminus\{a,b\}$ and
$E(G') = E \setminus \big(\delta(a)\cup\delta(b)\big)$. It is
straightforward to see that $x'$ is a vertex of $Q(G')$. Due to minimality assumption $P(G')=Q(G')$ and thus both $x'$ and $x$ are integral, a contradiction.
\end{proof}

\bibliographystyle{alpha}
\bibliography{StableMatchingPolytopeReferences}
\end{document}